\begin{document}

\title{Diamond-alpha Integral Inequalities on Time Scales\thanks{Accepted to the 8th Portuguese Conference on Automatic Control -- CONTROLO'2008, 21 to 23 July 2008, UTAD University, Vila Real, Portugal.}}

\author{\textbf{Rui A. C. Ferreira$^1$, Moulay Rchid Sidi Ammi$^2$ and Delfim F. M. Torres$^3$}}

\date{Control Theory Group (\textsf{cotg})\\
Centre for Research on Optimization and Control\\
Department of Mathematics, University of Aveiro\\
3810-193 Aveiro, Portugal\\[0.3cm]
$^1$\url{ruiacferreira@ua.pt}, 
$^2$\url{sidiammi@ua.pt},
$^3$\url{delfim@ua.pt}}

\maketitle


\begin{abstract}
\noindent \emph{The theory of the calculus of variations was recently extended to the more general time scales setting, 
both for delta and nabla integrals. The primary purpose 
of this paper is to further extend
the theory on time scales, by establishing some basic
diamond-alpha dynamic integral inequalities. We prove generalized
versions of H\"{o}lder, Cauchy-Schwarz, Minkowski, and Jensen's
inequalities. For the particular case when alpha is equal to one
or alpha is equal to zero one gets, respectively, correspondent
delta and nabla inequalities. If we further restrict ourselves by
fixing the time scale to the real or integer numbers, then we
obtain the classical inequalities, whose role in optimal control
is well known. By analogy, we trust that the diamond-alpha
integral inequalities we prove here will be important in the study of control systems on times scales.
}
\medskip

\noindent\textbf{Keywords:} time scales, diamond alpha derivatives, diamond alpha integrals,
inequalities, optimal control.

\medskip

\noindent\textbf{2000 Mathematics Subject Classification:}
26D15, 39A12, 49K05.
\end{abstract}


\section{Introduction}

The study of inequalities is an important subject in optimal
control. To give a very simple motivational example, let us
consider the basic problem of the calculus of variations in the
classical setting: given a $C^1$ smooth Lagrangian
$(t,x,v)\rightarrow L(t,x,v)$, determine a minimizer of the
integral functional
\begin{equation*}
J[x(\cdot)] = \int_a^b L\left(t,x(t),\dot{x}(t)\right) dt
\end{equation*}
among all functions
\begin{equation}
\label{eq:adm}
\begin{gathered}
x(\cdot) \in C^1\left([a,b];\mathbb{R}\right) \, , \\
x(a) = \alpha \, , \quad x(b) = \beta \, .
\end{gathered}
\end{equation}
A function $x(\cdot)$ satisfying \eqref{eq:adm} is said to be
\emph{admissible}. The classical theory of the calculus of
variations asserts that if $\tilde{x}(\cdot)$ is an admissible
function satisfying $J[\tilde{x}(\cdot)] \le J[x(\cdot)]$ for all
admissible $x(\cdot)$, then $\tilde{x}(\cdot)$ satisfy the
Euler-Lagrange equation, \textrm{i.e.}
\begin{equation}
\label{eq:EL}
\frac{d}{dt} \left[ \frac{\partial L}{\partial v}\left(t,
\tilde{x}(t), \dot{\tilde{x}}(t)\right) \right]
= \frac{\partial L}{\partial x}\left(t,
\tilde{x}(t), \dot{\tilde{x}}(t)\right) \, .
\end{equation}
Let us fix $L = v^2$, $a = \alpha = 0$, $b = \beta = 1$. Then, the
Euler-Lagrange equation \eqref{eq:EL} give us a unique candidate
to minimizer: $\tilde{x}(t) = t$. It follows from the Schwarz
inequality,
\begin{equation*}
\left(\int_a^b f(t) g(t) dt \right)^2 \le \left(\int_a^b f^2(t) dt
\right) \left(\int_a^b g^2(t) dt \right) \, ,
\end{equation*}
that
\begin{equation*}
1 = x(1) - x(0) = \left(\int_0^1 \dot{x}(t) dt\right)^2 
\le \int_0^1 1 dt \int_0^1 \dot{x}^2(t) dt = J[x(\cdot)] \, .
\end{equation*}
Since $J[\tilde{x}(\cdot)] = 1$, we have just proved that
$\tilde{x}(t) = t$ is indeed the global minimizer of the problem
\begin{gather*}
J[x(\cdot)]=\int_0^1 \dot{x}^2(t) dt \longrightarrow \min \, ,\\
x(\cdot) \in C^1\left([0,1]; \mathbb{R}\right) \, , \\
x(0) = 0 \, , \quad x(1) = 1 \, .
\end{gather*}
We remark that similar conclusion can be obtained from Cauchy's
inequality
\begin{equation*}
\left(\sum_{i=1}^n a_i b_i\right)^2 \le \sum_{i=1}^n a_i^2
\sum_{i=1}^n b_i^2 \, ,
\end{equation*}
if one consideres the correspondent discrete problem of the
calculus of variations. Another example of the usefulness of
integral inequalities in optimal control arises in connection with
the Lavrentiev phenomenon (\textrm{cf.} \textrm{e.g.}, the
celebrated example of Mani\'{a} which makes use of Jenson's
inequality \cite{bookCesari}). Other applications can be found in
\cite{inequalities,IJPAM,jipam-time-scales1}.

Recently, the theory of the calculus of variations was extended to
general time scales, both for delta and nabla integrals: see
\cite{ZbigDel,Delfim-Rui2,Tangier-with-Rui} and references
therein. In this paper we generalize some inequalities to generic
time scales. In order to cover both delta and nabla integral
inequalities, we consider here the more general framework of
diamond-$\alpha$ integrals \cite{notes:diamond,sQ,diamond}.


\section{Basics on diamond-$\alpha$ dynamic derivatives and integrals}
\label{sec2}

A nonempty closed subset of $\mathbb{R}$ is called a \emph{time
scale}, being denoted by $\mathbb{T}$.

The \emph{forward jump operator}
$\sigma:\mathbb{T}\rightarrow\mathbb{T}$ is defined by
$$\sigma(t)=\inf{\{s\in\mathbb{T}:s>t}\},\mbox{ for all $t\in\mathbb{T}$},$$
while the \emph{backward jump operator}
$\rho:\mathbb{T}\rightarrow\mathbb{T}$ is defined by
$$\rho(t)=\sup{\{s\in\mathbb{T}:s<t}\},\mbox{ for all
$t\in\mathbb{T}$},$$ with $\inf\emptyset=\sup\mathbb{T}$
(\textrm{i.e.}, $\sigma(M)=M$ if $\mathbb{T}$ has a maximum $M$)
and $\sup\emptyset=\inf\mathbb{T}$ (\textrm{i.e.}, $\rho(m)=m$ if
$\mathbb{T}$ has a minimum $m$).

A point $t\in\mathbb{T}$ is called \emph{right-dense},
\emph{right-scattered}, \emph{left-dense} or \emph{left-scattered}
if $\sigma(t)=t$, $\sigma(t)>t$, $\rho(t)=t$, or $\rho(t)<t$,
respectively.

The \emph{(forward) graininess function}
$\mu:\mathbb{T}\rightarrow[0,\infty)$ and the \emph{backward
graininess function} $\nu:\mathbb{T}\rightarrow[0,\infty)$ are
defined by
$$\mu(t)=\sigma(t)-t \, , \quad \nu(t)=t-\rho(t) \, , \quad
\mbox{for all $t\in\mathbb{T}$},$$ respectively.

Throughout the text we let $a,b\in\mathbb{T}$ with $a<b$ and
consider the interval $[a,b]$ in $\mathbb{T}$. We then define
$[a,b]^k=[a,b]$ if $b$ is left-dense and $[a,b]^k=[a,\rho(b)]$ if
$b$ is left-scattered. Similarly, we define $[a,b]_k=[a,b]$ if $a$
is right-dense and $[a,b]_k=[\sigma(a),b]$ if $a$ is
right-scattered. We denote $[a,b]^k\cap[a,b]_k$ by $[a,b]^k_k$.

We say that a function $f:[a,b]\rightarrow\mathbb{R}$ is
\emph{delta differentiable} at $t\in[a,b]^k$ if there is a number
$f^{\Delta}(t)$ such that for all $\varepsilon>0$ there exists a
neighborhood $U$ of $t$ (\textrm{i.e.},
$U=(t-\delta,t+\delta)\cap\mathbb{T}$ for some $\delta>0$) such
that
$$|f(\sigma(t))-f(s)-f^{\Delta}(t)(\sigma(t)-s)|
\leq\varepsilon|\sigma(t)-s|$$ for all $s\in U$. We call
$f^{\Delta}(t)$ the \emph{delta derivative} of $f$ at $t$.
Similarly, we say that a function $f:[a,b]\rightarrow\mathbb{R}$
is \emph{nabla differentiable} at $t\in[a,b]_k$ if there is a
number $f^{\nabla}(t)$ such that for all $\varepsilon>0$ there
exists a neighborhood $U$ of $t$ (\textrm{i.e.},
$U=(t-\delta,t+\delta)\cap\mathbb{T}$ for some $\delta>0$) such
that
$$|f(\rho(t))-f(s)-f^{\nabla}(t)(\rho(t)-s)|
\leq\varepsilon|\rho(t)-s|$$ for all $s\in U$. We call
$f^{\nabla}(t)$ the \emph{nabla derivative} of $f$ at $t$.

With the above definitions we define a combined dynamic
derivative:
\begin{definition}
Let $f$ be simultaneously \emph{delta} and {nabla} differentiable.
For $t\in[a,b]^k_k$ we define the diamond-$\alpha$ dynamic
derivative $f^{\Diamond_\alpha}(t)$ by
$$f^{\Diamond_\alpha}(t)=\alpha f^\Delta(t)+(1-\alpha)f^\nabla(t) \, ,
\quad 0\leq\alpha\leq 1.$$
\end{definition}

A function $f:\mathbb{T}\rightarrow\mathbb{R}$ is called
\emph{rd-continuous} if it is continuous at right-dense points and
if its left-sided limit exists at left-dense points. We denote the
set of all rd-continuous functions by C$_{\textrm{rd}}$ or
C$_{\textrm{rd}}[\mathbb{T}]$, and the set of all delta
differentiable functions with rd-continuous derivative by
C$_{\textrm{rd}}^1$ or C$_{\textrm{rd}}^1[\mathbb{T}]$.

It is known that rd-continuous functions possess an
\emph{antiderivative}, \textrm{i.e.}, there exists a function $F$
with $F^\Delta=f$, and in this case the $\Delta$-\emph{integral}
is defined by $\int_{a}^{b}f(t)\Delta t=F(b)-F(a)$. Analogously,
we define the $\nabla$-\emph{integral} \cite{advance}.

Suppose now that the following integrals are well defined:
\begin{equation}
\label{deltaint} \int_a^b f(t)\Delta t
\end{equation}
and
\begin{equation}
\label{nablaint} \int_a^b f(t)\nabla t.
\end{equation}
Then, we introduce the following definition.

\begin{definition}
The $\Diamond_\alpha$ integral of function $f$ is given by
\begin{equation}
\label{diamint} \int_a^b f(t)\Diamond_\alpha t=\alpha\int_a^b
f(t)\Delta t+(1-\alpha)\int_a^b f(t)\nabla t,
\end{equation}
$0\leq\alpha\leq 1$.
\end{definition}
We may notice that when $\alpha=1$, (\ref{diamint}) reduces to the
standard $\Delta$-integral, and when $\alpha=0$ we obtain the
standard $\nabla$-integral.

\begin{remark}
A sufficient condition for the existence of (\ref{deltaint}) and
(\ref{nablaint}) is the continuity of function $f$ on $[a,b]$.
\end{remark}

Next lemma provides some trivial but useful results for what
follows.

\begin{lemma}\label{lem1}
Assume that $f$ and $g$ are continuous functions on $[a,b]$.
\begin{enumerate}
    \item If $f(t)\geq 0$ for all $t\in[a,b]$,
    then $$\int_a^b f(t)\Diamond_\alpha t\geq
    0 .$$
    \item If $f(t)\leq g(t)$ for all $t\in[a,b]$,
    then $$\int_a^b f(t)\Diamond_\alpha t\leq\int_a^b g(t)\Diamond_\alpha
    t .$$
    \item If $f(t)\geq 0$ for all $t\in[a,b]$, then $f(t)=0$ if and
    only if $\int_a^b f(t)\Diamond_\alpha t=0$.
\end{enumerate}
\end{lemma}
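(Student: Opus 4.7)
The plan is to exploit the convex-combination structure of the $\Diamond_\alpha$ integral given by (\ref{diamint}), reducing each of the three claims to the corresponding well-known properties of the $\Delta$- and $\nabla$-integrals separately (see \cite{advance}). Since $f$ and $g$ are assumed continuous on $[a,b]$, both $\int_a^b f(t)\Delta t$ and $\int_a^b f(t)\nabla t$ exist by the Remark preceding the lemma, so every expression below is well defined.

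For part (1), the standard monotonicity of the $\Delta$- and $\nabla$-integrals gives $\int_a^b f(t)\Delta t \geq 0$ and $\int_a^b f(t)\nabla t \geq 0$ whenever $f \geq 0$. Since $0 \leq \alpha \leq 1$, both coefficients $\alpha$ and $1-\alpha$ are nonnegative, so (\ref{diamint}) presents $\int_a^b f(t)\Diamond_\alpha t$ as a nonnegative linear combination of nonnegative numbers, proving the claim. For part (2), I would apply part (1) to the continuous nonnegative function $g-f$, then invoke linearity of the $\Delta$- and $\nabla$-integrals, which passes directly to $\Diamond_\alpha$ by (\ref{diamint}), yielding $\int_a^b g(t)\Diamond_\alpha t - \int_a^b f(t)\Diamond_\alpha t = \int_a^b(g(t)-f(t))\Diamond_\alpha t \geq 0$.

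For part (3), the implication $f \equiv 0 \Rightarrow \int_a^b f\,\Diamond_\alpha t = 0$ is immediate from (\ref{diamint}). Conversely, assume $f \geq 0$ and $\int_a^b f(t)\Diamond_\alpha t = 0$. Because $f \geq 0$, the two summands $\alpha\int_a^b f(t)\Delta t$ and $(1-\alpha)\int_a^b f(t)\nabla t$ are each nonnegative and sum to zero, hence each vanishes. When $0 < \alpha < 1$ both $\int_a^b f(t)\Delta t = 0$ and $\int_a^b f(t)\nabla t = 0$; the endpoint cases $\alpha = 0$ and $\alpha = 1$ yield one such identity, which is enough.

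The genuine obstacle is the final implication: from $\int_a^b f(t)\Delta t = 0$ (or the $\nabla$-analogue) with $f$ continuous and nonnegative, conclude $f \equiv 0$ on $[a,b]$. I would argue via the antiderivative $F(t) = \int_a^t f(s)\Delta s$: since $0 \leq F(t) \leq F(b) = 0$, we have $F \equiv 0$. At right-dense points $t$ the continuity of $f$ gives $F^{\Delta}(t) = f(t)$, and at right-scattered points the relation $F(\sigma(t)) - F(t) = \mu(t)f(t)$ forces $f(t) = 0$ as well. The nabla case is symmetric. This step is where the time-scales setting requires a little care beyond the purely classical argument, since one must verify the conclusion at both right-dense and right-scattered points separately.
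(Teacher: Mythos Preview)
Your argument for parts (1) and (2) coincides with the paper's: both invoke the known nonnegativity of the $\Delta$- and $\nabla$-integrals together with the convex-combination structure of (\ref{diamint}), and both prove (2) by applying (1) to $g-f$ and using linearity. For part (3) the paper argues the converse by contradiction---if $f(t_0)>0$ for some $t_0$, then at least one of $\int_a^b f\,\Delta t$, $\int_a^b f\,\nabla t$ is strictly positive, forcing $\int_a^b f\,\Diamond_\alpha t>0$---whereas you take the contrapositive directly and supply the antiderivative details that the paper leaves to ``it is easy to see''. These are equivalent routes; yours is simply more explicit.

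One caveat worth flagging: your assertion that in the endpoint cases $\alpha=1$ or $\alpha=0$ a single vanishing identity ``is enough'' does not quite hold as written. From $\int_a^b f\,\Delta t=0$ with $f\ge 0$ continuous, your antiderivative argument yields $f=0$ only on $[a,b]^k$; if $b$ is left-scattered the value $f(b)$ is invisible to the $\Delta$-integral (take $\mathbb{T}=\mathbb{Z}$, $a=0$, $b=2$, $f(0)=f(1)=0$, $f(2)=1$). The paper's contradiction argument shares precisely the same lacuna, since when $\alpha=1$ the strictly positive $\nabla$-integral receives zero weight in (\ref{diamint}). For $0<\alpha<1$ there is no difficulty: both component integrals must vanish, your argument gives $f=0$ on $[a,b]^k$ and on $[a,b]_k$ separately, and their union is all of $[a,b]$.
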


\begin{proof}
Let $f(t)$ and $g(t)$ be continuous functions on $[a,b]$.
\begin{enumerate}
    \item Since $f(t)\geq 0$ for all $t\in[a,b]$, we know (see
    \cite{livro,advance}) that $\int_a^b f(t)\Delta t\geq 0$
    and $\int_a^b f(t)\nabla t\geq 0$. Since $\alpha\in[0,1]$, the result follows.
    \item Let $h(t)=g(t)-f(t)$. Then, $\int_a^b h(t)\Diamond_\alpha t\geq
    0$ and the result follows from Theorem~3.7 (i) of
    \cite{diamond}.
    \item If $f(t)=0$ for all $t\in[a,b]$, the result is immediate. Suppose now that
    there exists $t_0\in[a,b]$ such that $f(t_0)>0$. It is easy to
    see that at least one of the integrals $\int_a^b f(t)\Delta
    t$ or $\int_a^b f(t)\nabla t$ is strictly positive. Then,
    we have the contradiction $\int_a^b f(t)\Diamond_\alpha t>0$.
\end{enumerate}
\end{proof}

For completeness, we also prove:

\begin{lemma}
Let $c$ be a real constant. Then,
$$\int_a^b c \Diamond_\alpha
t=c(b-a) .$$
\end{lemma}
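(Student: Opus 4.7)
The plan is to reduce the statement directly to the definition of the diamond-$\alpha$ integral given in \eqref{diamint} and then invoke the known values of the constant $\Delta$- and $\nabla$-integrals.

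First I would apply \eqref{diamint} to the constant function $f(t)=c$, writing
\[
\int_a^b c\,\Diamond_\alpha t = \alpha\int_a^b c\,\Delta t + (1-\alpha)\int_a^b c\,\nabla t.
\]
Next, I would use the standard facts from the calculus on time scales (see \cite{livro,advance}), namely that $\int_a^b c\,\Delta t = c(b-a)$ and $\int_a^b c\,\nabla t = c(b-a)$. Both identities follow immediately from the fact that the function $F(t)=c(t-a)$ satisfies $F^\Delta(t)=c$ and $F^\nabla(t)=c$, so it serves as an antiderivative for $c$ in both senses, and the Fundamental Theorem of Calculus on time scales yields the claim.

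Finally, substituting back gives
\[
\int_a^b c\,\Diamond_\alpha t = \alpha\, c(b-a) + (1-\alpha)\, c(b-a) = c(b-a),
\]
which is the desired equality. Since every step is a direct application of a definition or a cited result, there is no real obstacle; the only thing to be careful about is to ensure that $c$ being continuous on $[a,b]$ guarantees the existence of both \eqref{deltaint} and \eqref{nablaint}, as noted in the remark preceding the statement, so that the diamond-$\alpha$ integral is indeed well defined.
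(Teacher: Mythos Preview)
Your proof is correct and follows essentially the same approach as the paper: reduce to the $\Delta$- and $\nabla$-integrals via the definition \eqref{diamint} and use the known values of the constant integrals. The only cosmetic difference is that the paper first treats the case $c=1$ and then invokes the homogeneity of the diamond-$\alpha$ integral (Theorem~3.7~(ii) of \cite{diamond}) to pass to a general constant, whereas you handle an arbitrary $c$ directly; this does not change the substance of the argument.
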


\begin{proof}
We have
\begin{equation*}
\begin{split}
\int_a^b 1 \Diamond_\alpha t &=\alpha\int_a^b 1 \Delta
t+(1-\alpha)\int_a^b 1 \nabla t\\
&=\alpha(b-a)+(1-\alpha)(b-a)=b-a ,
\end{split}
\end{equation*}
and the result follows using Theorem~3.7 (ii) of \cite{diamond}.
\end{proof}


\section{Main Results}

We start in \S\ref{sub:sec:H:CS:M} by proving H\"{o}lder's
inequality using $\Diamond_\alpha$ integrals. Then, we obtain the
Cauchy-Schwarz and Minkowski inequalities as corollaries. In
\S\ref{sub:sec:J} we prove a Jensen's diamond-$\alpha$ integral
inequality.


\subsection{H\"{o}lder, Cauchy-Schwarz and Minkowski Inequalities}
\label{sub:sec:H:CS:M}

\begin{theorem}[H\"{o}lder's Inequality] For continuous
functions $f$, $g:[a,b]\rightarrow\mathbb{R}$, we have:
\begin{equation*}
\int_a^b |f(t)g(t)|\Diamond_\alpha t
\leq \left[\int_a^b |f(t)|^p\Diamond_\alpha
t\right]^{\frac{1}{p}}\left[\int_a^b |g(t)|^q\Diamond_\alpha
t\right]^{\frac{1}{q}},
\end{equation*}
where $p>1$, and $q=\frac{p}{p-1}$.
\end{theorem}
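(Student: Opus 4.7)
The plan is the standard Young-inequality approach, adapted to the $\Diamond_\alpha$ setting where linearity and monotonicity of the integral are supplied by Lemma~\ref{lem1}.

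First I would dispose of the degenerate cases. If $\int_a^b |f(t)|^p \Diamond_\alpha t = 0$, then since $|f|^p$ is continuous and nonnegative on $[a,b]$, Lemma~\ref{lem1}(3) gives $|f(t)|^p \equiv 0$, hence $f \equiv 0$, and both sides of the claimed inequality vanish; the same argument handles the case $\int_a^b |g(t)|^q \Diamond_\alpha t = 0$. So from now on assume both factors on the right-hand side are strictly positive.

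The key analytic ingredient is Young's inequality: for any $x,y \geq 0$ and conjugate exponents $p,q>1$ with $\frac{1}{p}+\frac{1}{q}=1$, one has
\begin{equation*}
xy \leq \frac{x^p}{p} + \frac{y^q}{q}.
\end{equation*}
I would apply this pointwise with
\begin{equation*}
x = \frac{|f(t)|}{\left(\int_a^b |f(s)|^p \Diamond_\alpha s\right)^{1/p}}, \qquad
y = \frac{|g(t)|}{\left(\int_a^b |g(s)|^q \Diamond_\alpha s\right)^{1/q}},
\end{equation*}
which are well defined by the previous paragraph.

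Integrating the resulting pointwise bound in the $\Diamond_\alpha$ sense, and using monotonicity of the $\Diamond_\alpha$ integral (Lemma~\ref{lem1}(2)) together with its linearity (immediate from the definition \eqref{diamint} and linearity of the $\Delta$- and $\nabla$-integrals), the right-hand side simplifies to $\frac{1}{p}+\frac{1}{q}=1$, while the left-hand side becomes exactly
\begin{equation*}
\frac{\int_a^b |f(t)g(t)|\,\Diamond_\alpha t}{\left[\int_a^b |f|^p\Diamond_\alpha t\right]^{1/p}\left[\int_a^b |g|^q\Diamond_\alpha t\right]^{1/q}}.
\end{equation*}
Cross-multiplying gives the stated inequality. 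There is no real obstacle here beyond bookkeeping; the only subtlety is the degenerate case, which requires part (3) of Lemma~\ref{lem1} rather than just part (1), and the silent use of linearity of $\int \cdot \Diamond_\alpha t$, both of which are available from the preceding section.
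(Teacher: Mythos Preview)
Your proposal is correct and is essentially the same argument as the paper's: both dispose of the degenerate case via Lemma~\ref{lem1}(3), apply Young's inequality pointwise to the normalized functions, and then integrate using the monotonicity from Lemma~\ref{lem1}(2) to obtain the bound $\tfrac{1}{p}+\tfrac{1}{q}=1$. The only cosmetic difference is that the paper states Young's inequality in the form $x^{1/p}y^{1/q}\le x/p+y/q$ and substitutes $x=|f|^p/\int|f|^p$, $y=|g|^q/\int|g|^q$, whereas you use the equivalent form $xy\le x^p/p+y^q/q$ with the corresponding unnormalized choices.
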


\begin{proof}
For nonnegative real numbers $x$ and $y$, we have (see
\textrm{e.g.} \cite{inesurvey})
\begin{equation}
\label{des1}
x^{\frac{1}{p}}y^{\frac{1}{q}}\leq\frac{x}{p}+\frac{y}{q}.
\end{equation}
If
$$\left[\int_a^b |f(t)|^p\Diamond_\alpha t\right]^{\frac{1}{p}}\left[
\int_a^b |g(t)|^q\Diamond_\alpha t\right]^{\frac{1}{q}}=0,$$ then
$f(t)=0$ or $g(t)=0$ and the result follows by item~3 of
Lemma~\ref{lem1}. So, we assume that
$$\left[\int_a^b |f(t)|^p\Diamond_\alpha
t\right]^{\frac{1}{p}}\left[\int_a^b |g(t)|^q\Diamond_\alpha
t\right]^{\frac{1}{q}}\neq 0$$
and apply (\ref{des1}) to
$$x(t)=\frac{|f(t)|^p}{\int_a^b |f(\tau)|^p\Diamond_\alpha\tau}\
\mbox{and}\ y(t)=\frac{|g(t)|^q}{\int_a^b
|g(\tau)|^q\Diamond_\alpha\tau}$$ obtaining
\begin{equation*}
\frac{|f(t)|}{[\int_a^b
|f(\tau)|^p\Diamond_\alpha\tau]^{\frac{1}{p}}}\frac{|g(t)|}{[
\int_a^b |g(\tau)|^q\Diamond_\alpha\tau]^{\frac{1}{q}}} 
\leq \frac{1}{p}\frac{|f(t)|^p}{\int_a^b
|f(\tau)|^p\Diamond_\alpha\tau}
+\frac{1}{q}\frac{|g(t)|^q}{\int_a^b
|g(\tau)|^q\Diamond_\alpha\tau}.
\end{equation*}
Integrating both sides of the obtained inequality between $a$ and
$b$, and using item~2 of Lemma~\ref{lem1}, we obtain
\begin{align*}
&\int_a^b\frac{|f(t)|}{[\int_a^b
|f(\tau)|^p\Diamond_\alpha\tau]^{\frac{1}{p}}}\frac{|g(t)|}{[\int_a^b
|g(\tau)|^q\Diamond_\alpha\tau]^{\frac{1}{q}}}\Diamond_\alpha
t\\
&\leq\int_a^b\left\{\frac{1}{p}\frac{|f(t)|^p}{\int_a^b
|f(\tau)|^p\Diamond_\alpha\tau}+\frac{1}{q}\frac{|g(t)|^q}{\int_a^b
|g(\tau)|^q\Diamond_\alpha\tau}\right\}\Diamond_\alpha t\\
&=\frac{1}{p}+\frac{1}{q} =1,
\end{align*}
from which we get H\"{o}lder's inequality.
\end{proof}

If we let $p=q=2$ in the above theorem, we obtain the
Cauchy-Schwarz inequality.

\begin{theorem}[Cauchy-Schwarz Inequality] For continuous
functions $f$, $g:[a,b]\rightarrow\mathbb{R}$ we have
\begin{equation*}
\int_a^b |f(t)g(t)|\Diamond_\alpha t
\leq \sqrt{\left[\int_a^b |f(t)|^2\Diamond_\alpha
t\right]\left[\int_a^b |g(t)|^2\Diamond_\alpha t\right]} \, .
\end{equation*}
\end{theorem}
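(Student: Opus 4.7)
The plan is to invoke the H\"{o}lder's inequality already established, specialized to the self-conjugate exponent $p = 2$. First I check admissibility: $p = 2 > 1$ and $q = p/(p-1) = 2/(2-1) = 2$, so the pair $(p,q) = (2,2)$ satisfies the hypotheses of the preceding theorem.

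Substituting $p = q = 2$ directly into the H\"{o}lder bound yields
$$\int_a^b |f(t)g(t)|\Diamond_\alpha t \leq \left[\int_a^b |f(t)|^2\Diamond_\alpha t\right]^{1/2}\left[\int_a^b |g(t)|^2\Diamond_\alpha t\right]^{1/2},$$
and combining the two factors on the right, each raised to the power $1/2$, into a single square root of their product gives exactly the asserted form.

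There is essentially no obstacle to overcome: the statement is presented by the authors explicitly as a corollary (\emph{``If we let $p = q = 2$ in the above theorem''}), so the proof reduces to a one-line specialization. The only minor sanity check is that the quantities under the square root are non-negative — and therefore that the square root is well-defined — which follows from item~1 of Lemma~\ref{lem1} applied to the continuous non-negative integrands $|f|^2$ and $|g|^2$.
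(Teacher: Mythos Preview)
Your proof is correct and matches the paper's approach exactly: the authors present the Cauchy--Schwarz inequality as an immediate specialization of the preceding H\"{o}lder inequality with $p=q=2$, without even writing out a separate proof. Your additional remark about the non-negativity of the quantities under the square root, justified via item~1 of Lemma~\ref{lem1}, is a harmless sanity check that the paper leaves implicit.
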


Next, we use H\"{o}lder's inequality to deduce Minkowski's
inequality.

\begin{theorem}[Minkowski's Inequality]
For continuous functions $f$, $g:[a,b]\rightarrow\mathbb{R}$ we
have
\begin{equation*}
\left[\int_a^b |f(t)+g(t)|^p\Diamond_\alpha
t\right]^{\frac{1}{p}}
\leq \left[\int_a^b |f(t)|^p\Diamond_\alpha t\right]^{\frac{1}{p}}
+\left[\int_a^b |g(t)|^p\Diamond_\alpha t\right]^{\frac{1}{p}},
\end{equation*}
where $p>1$.
\end{theorem}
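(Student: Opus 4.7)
The plan is to reduce Minkowski's inequality to the just-proved Hölder inequality by the classical splitting trick. First I would start from the pointwise estimate
\begin{equation*}
|f(t)+g(t)|^p = |f(t)+g(t)|\,|f(t)+g(t)|^{p-1}
\leq |f(t)|\,|f(t)+g(t)|^{p-1} + |g(t)|\,|f(t)+g(t)|^{p-1},
\end{equation*}
which is just the triangle inequality multiplied by the nonnegative quantity $|f+g|^{p-1}$. Integrating both sides with respect to $\Diamond_\alpha t$ on $[a,b]$ and invoking item~2 of Lemma~\ref{lem1}, I obtain
\begin{equation*}
\int_a^b |f(t)+g(t)|^p \Diamond_\alpha t
\leq \int_a^b |f(t)|\,|f(t)+g(t)|^{p-1} \Diamond_\alpha t
+ \int_a^b |g(t)|\,|f(t)+g(t)|^{p-1} \Diamond_\alpha t.
\end{equation*}

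Next I would apply Hölder's inequality with exponents $p$ and $q=\frac{p}{p-1}$ to each of the two integrals on the right. Using $q(p-1)=p$, each Hölder application produces a factor $\bigl[\int_a^b |f+g|^p \Diamond_\alpha t\bigr]^{1/q}$, so after collecting terms the right-hand side becomes
\begin{equation*}
\left(\left[\int_a^b |f(t)|^p \Diamond_\alpha t\right]^{1/p}
+ \left[\int_a^b |g(t)|^p \Diamond_\alpha t\right]^{1/p}\right)
\left[\int_a^b |f(t)+g(t)|^p \Diamond_\alpha t\right]^{1/q}.
\end{equation*}
Dividing both sides by $\bigl[\int_a^b |f+g|^p \Diamond_\alpha t\bigr]^{1/q}$ and using $1-\frac{1}{q}=\frac{1}{p}$ yields the claimed inequality.

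The only delicate point is the degenerate case in which $\int_a^b |f(t)+g(t)|^p \Diamond_\alpha t = 0$, since then the division above is illegal; however by item~3 of Lemma~\ref{lem1} this forces $f(t)+g(t)\equiv 0$ on $[a,b]$, making the left-hand side of Minkowski's inequality vanish while the right-hand side is manifestly nonnegative, so the inequality holds trivially. I expect this bookkeeping step and the verification of the exponent identity $q(p-1)=p$ inside the Hölder application to be the only nonroutine matters; the rest is a direct transcription of the real-variable argument, which goes through because the three tools used (linearity of $\Diamond_\alpha$ integration, monotonicity from Lemma~\ref{lem1}, and Hölder's inequality) have all been established for diamond-$\alpha$ integrals in the preceding material.
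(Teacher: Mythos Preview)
Your proof is correct and follows exactly the same approach as the paper: the triangle-inequality splitting of $|f+g|^p$, the application of H\"older with $q=p/(p-1)$ to each summand, and the final division by $\bigl[\int_a^b |f+g|^p \Diamond_\alpha t\bigr]^{1/q}$. In fact you go slightly further than the paper, which does not explicitly treat the degenerate case $\int_a^b |f+g|^p \Diamond_\alpha t = 0$; your handling of it via item~3 of Lemma~\ref{lem1} is a welcome addition.
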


\begin{proof}
First, note that
\begin{align}
\int_a^b |f(t)&+g(t)|^p\Diamond_\alpha t =\int_a^b |f(t)+g(t)|^{p-1}|f(t)+g(t)|\Diamond_\alpha t\nonumber\\
&\leq \int_a^b|f(t)||f(t)+g(t)|^{p-1}\Diamond_\alpha t +\int_a^b|g(t)| \, |f(t)+g(t)|^{p-1}\Diamond_\alpha t ,
\label{mink1}
\end{align}
by the triangle inequality. Next, we apply H\"{o}lder's inequality
with $q=p/(p-1)$ to (\ref{mink1}) to obtain
\begin{multline}
\label{eq:prf:MI}
\int_a^b |f(t)+g(t)|^p\Diamond_\alpha t 
\leq\left[\int_a^b|f(t)|^p\Diamond_\alpha
t\right]^{\frac{1}{p}}\left[\int_a^b
|f(t)+g(t)|^{(p-1)q}\Diamond_\alpha
t\right]^{\frac{1}{q}}\\
+\left[\int_a^b|g(t)|^p\Diamond_\alpha
t\right]^{\frac{1}{p}}\left[\int_a^b
|f(t)+g(t)|^{(p-1)q}\Diamond_\alpha
t\right]^{\frac{1}{q}} \, .
\end{multline}
The right-hand side of \eqref{eq:prf:MI} equals
\begin{equation}
\label{eq:rhs} \left\{\left[\int_a^b|f(t)|^p\Diamond_\alpha
t\right]^{\frac{1}{p}} +\left[\int_a^b|g(t)|^p\Diamond_\alpha
t\right]^{\frac{1}{p}}\right\}
\left[\int_a^b|f(t)+g(t)|^p\Diamond_\alpha
t\right]^{\frac{1}{q}}
\end{equation}
and dividing the left-hand side of \eqref{eq:prf:MI} and
\eqref{eq:rhs} by
$$\left[\int_a^b|f(t)+g(t)|^p\Diamond_\alpha
t\right]^{\frac{1}{q}},$$ we arrive to Minkowski's inequality.
\end{proof}


\subsection{Jensen's Inequality}
\label{sub:sec:J}

\begin{theorem}[Jensen's Inequality] Let $c$ and $d$ be real numbers.
Suppose that $g:[a,b]\rightarrow(c,d)$ is continuous and
$F:(c,d)\rightarrow\mathbb{R}$ is convex. Then,
$$F\left(\frac{\int_a^b g(t)\Diamond_\alpha
t}{b-a}\right)\leq\frac{\int_a^b F(g(t))\Diamond_\alpha t}{b-a}.$$
\end{theorem}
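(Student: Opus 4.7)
The plan is to follow the classical ``supporting line'' proof of Jensen's inequality, adapted to the diamond-$\alpha$ setting. The two inputs I will rely on are (i) the standard fact from convex analysis that a convex function $F$ on an open interval $(c,d)$ admits at every interior point $x_0$ a subgradient, i.e.\ a real number $k$ such that
\begin{equation*}
F(x) \geq F(x_0) + k(x - x_0) \quad \text{for all } x \in (c,d),
\end{equation*}
and (ii) the linearity and monotonicity of the $\Diamond_\alpha$ integral, together with $\int_a^b c \,\Diamond_\alpha t = c(b-a)$, both of which are proved in the preceding lemmas.

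The first step is to define
\begin{equation*}
x_0 = \frac{\int_a^b g(t)\,\Diamond_\alpha t}{b-a}
\end{equation*}
and verify that $x_0 \in (c,d)$, so that $F(x_0)$ makes sense and a subgradient at $x_0$ exists. Since $g$ is continuous on the compact set $[a,b]$ with values in $(c,d)$, its image lies in some $[m,M] \subset (c,d)$. Applying item~2 of Lemma~\ref{lem1} to the inequalities $m \leq g(t) \leq M$ and using $\int_a^b c\,\Diamond_\alpha t = c(b-a)$ yields $m \leq x_0 \leq M$, so indeed $x_0 \in (c,d)$.

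The second step is to pick a subgradient $k$ of $F$ at $x_0$ and substitute $x = g(t)$ in the supporting line inequality to obtain
\begin{equation*}
F(g(t)) \geq F(x_0) + k\bigl(g(t) - x_0\bigr) \quad \text{for all } t \in [a,b].
\end{equation*}
Then I integrate both sides with respect to $\Diamond_\alpha t$ over $[a,b]$, applying item~2 of Lemma~\ref{lem1}, the linearity of the $\Diamond_\alpha$ integral, and $\int_a^b x_0 \,\Diamond_\alpha t = x_0(b-a)$. The term involving $k$ collapses, by the very definition of $x_0$, to zero, leaving
\begin{equation*}
\int_a^b F(g(t))\,\Diamond_\alpha t \geq F(x_0)(b-a).
\end{equation*}
Dividing by $b-a > 0$ gives exactly Jensen's inequality.

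There is no real obstacle: the only subtlety is the one highlighted above, namely ensuring that the ``mean value'' $x_0$ lies in the open interval $(c,d)$ so that the subgradient inequality is available; compactness of $g([a,b])$ inside $(c,d)$ handles this. Once that is in place, the argument is a direct transcription of the classical proof, using only linearity and monotonicity of $\Diamond_\alpha$ which were established in Lemma~\ref{lem1} and the subsequent lemma.
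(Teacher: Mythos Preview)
Your proof is correct and follows essentially the same ``supporting line'' argument as the paper: define the mean $x_0$, invoke a subgradient of $F$ at $x_0$, substitute $x=g(t)$, integrate, and observe the linear term vanishes. If anything, your write-up is slightly more careful than the paper's, since you explicitly justify $x_0\in(c,d)$ via compactness of $g([a,b])$, whereas the paper asserts this only implicitly; conversely, the paper notes that $F$ (being convex on an open interval) is continuous so that $F\circ g$ is continuous and hence $\Diamond_\alpha$-integrable, a point you leave tacit.
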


\begin{proof}
Let $x_0\in(c,d)$. Then, there exists \cite{inesurvey}
$\gamma\in\mathbb{R}$ such that
\begin{equation}
\label{jen1} F(x)-F(x_0)\geq\gamma(x-x_0)\ \mbox{for all}\
x\in(c,d).
\end{equation}
Since $g$ is continuous, we can put
$$x_0=\frac{\int_a^b g(t)\Diamond_\alpha
t}{b-a}.$$ A convex function defined on an open interval is
continuous, so $F\circ g$ is also continuous, and hence we may
apply (\ref{jen1}) with $x=g(t)$ and integrate from $a$ to $b$ to
obtain
\begin{equation*}
\int_a^b F(g(t))\Diamond_\alpha t -\int_a^b F(x_0)\Diamond_\alpha t \geq \gamma\int_a^b [g(t)-x_0]\Diamond_\alpha t \, ,
\end{equation*}
that is,
\begin{equation*}
\int_a^b F(g(t))\Diamond_\alpha t-(b-a)F\left(\frac{\int_a^b
g(t)\Diamond_\alpha t}{b-a}\right)
\geq\gamma\left[\int_a^b g(t)\Diamond_\alpha t-x_0(b-a)\right]=0
\, .
\end{equation*}
This directly yields the Jensen's inequality.
\end{proof}

Further extensions are in progress and will appear elsewhere.


\section{Example}

Let $\mathbb{T}=\mathbb{Z}$ and $n\in\mathbb{N}$. Fix $a=1$,
$b=n+1$ and consider a function
$g:\{1,\ldots,n+1\}\rightarrow(0,\infty)$. Obviously, $F=-\log$ is
convex and continuous on $(0,\infty)$, so we may apply Jensen's
inequality to obtain
\begin{align*}
\log &\left[\frac{1}{n}\left(\alpha\sum_{t=1}^n
g(t)+(1-\alpha)\sum_{t=2}^{n+1}g(t)\right)\right] 
=\log\left[\frac{1}{n}\int_1^{n+1}g(t)\Diamond_\alpha
t\right]\\
&\geq\frac{1}{n}\int_1^{n+1}\log(g(t))\Diamond_\alpha t
=\frac{1}{n}\left[\alpha\sum_{t=1}^n
\log(g(t))+(1-\alpha)\sum_{t=2}^{n+1}\log(g(t))\right]\\
&=\log\left\{\prod_{t=1}^n
g(t)\right\}^{\frac{\alpha}{n}}+\log\left\{\prod_{t=2}^{n+1}
g(t)\right\}^{\frac{1-\alpha}{n}} \, .
\end{align*}
Hence,
\begin{equation*}
\frac{1}{n}\left(\alpha\sum_{t=1}^n
g(t)+(1-\alpha)\sum_{t=2}^{n+1}g(t)\right)
\geq\left\{\prod_{t=1}^n
g(t)\right\}^{\frac{\alpha}{n}}\left\{\prod_{t=2}^{n+1}
g(t)\right\}^{\frac{1-\alpha}{n}} \, .
\end{equation*}
When $\alpha=1$, we obtain the well-known arithmetic-mean
geometric-mean inequality
$$\frac{1}{n}\sum_{t=1}^n
g(t)\geq\left\{\prod_{t=1}^n g(t)\right\}^{\frac{1}{n}}.$$


\section*{Acknowledgments}

This work was partially supported by the \emph{Portuguese
Foundation for Science and Technology} (FCT), through the
\emph{Control Theory Group} (cotg) of the R\&D unit CEOC of the
University of Aveiro (\url{http://ceoc.mat.ua.pt}), cofinanced by
the European Community fund FEDER/POCI 2010. The first author also acknowledges the support given by the FCT Ph.D. fellowship
SFRH/BD/39816/2007.



\end{document}